\newcommand{\peven}{p\in2\mathbb{N}}
\newcommand{\pnoteven}{p\notin2\mathbb{N}}
\newcommand\norma[1]{\|#1\|}
\newcommand\innerproduct[2]{\langle #1,#2\rangle}
\newcommand\Rn{\mathbb{R}^n}
\newcommand\sphere[1]{\mathbb{S}^{#1}}
\newcommand{\finiteSet}[3]{\{#1_#2\}_{#2=1}^{#3}}
\newcommand\isometricIn[2]{#1\hookrightarrow #2}
\newcommand\notIsometricIn[2]{#1\not\hookrightarrow #2}
\newtheorem{theorem}{Theorem}
\newtheorem{lemma}{Lemma}
\newtheorem{proposition}{Proposition}
\newtheorem{corollary}{Corollary}
\newtheorem{definition}{Definition}
\newtheorem{example}{Example}
\title{On a question of Pietch}
\author{Yossi Lonke}
\date{}							
\begin{document}
\begin{abstract}
The main result is that a finite dimensional normed space embeds isometrically in $\ell_p$ if and only if it has a discrete Levy $p$-representation.
This provides an alternative answer to a question raised by Pietch, and as a corollary, a simple proof of the fact that unless $p$ is an even integer, the two-dimensional Hilbert space $\ell_2^2$ is not isometric to a subspace of $\ell_p$. The
situation for $\ell_q^2$ with $q\neq 2$ turns out to be much more restrictive. The main result combined with a result of Dor provides a proof of the fact that
if $q\neq 2$ then $\ell_q^2$ is not isometric to a subspace of $\ell_p$ unless $q=p$. Further applications
concerning restrictions on the degree of
 smoothness of finite dimensional subspaces of $\ell_p$ are included as well.
\end{abstract}

\maketitle
\section{Introduction}
\label{intro}
{\sl Which finite-dimensional subspaces of $L_p$  are isometric to subspaces of $\ell_p$}? This question, according to \cite{DJP}, was asked by A.~Pietch. 
The results of \cite{DJP} revealed an answer that depends on whether $p$ is an even integer ($p\in 2\mathbb{N}$) or not. It is summarized by the following theorem.
\begin{theorem}\label{DJPthm}

\begin{enumerate}
\item  If $\peven$, then every finite dimensional subspace of $L_p$ is isometric to a subspace of $\ell_p$. (\cite{DJP}, Theorem~B).
\item If $\pnoteven$, then a closed linear subspace $X$ of $L_p$ is isometric to a subspace of $\ell_p$ if and only if every (equivalently, some) unital subspace (i.e. a subspace containing the constant functions) which is isometric to $X$, consists of functions with discrete distribution. (\cite{DJP}, Theorem~A). 
\end{enumerate}
\end{theorem}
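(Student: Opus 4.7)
The theorem assembles two statements whose proofs require fundamentally different techniques, which I would treat in succession.

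For part (1), the key observation is that when $p = 2k$ is an even integer, the map $a \mapsto \|\sum_{i=1}^n a_i f_i\|_p^p$ restricted to any finite-dimensional subspace $X = \mathrm{span}(f_1,\ldots,f_n) \subset L_p(\Omega,\mu)$ is a homogeneous polynomial of degree $p$ in $a \in \mathbb{R}^n$, because $|y|^{2k} = y^{2k}$ is itself a polynomial. My plan is to consider the push-forward of $\mu$ under $\omega \mapsto (f_1(\omega),\ldots,f_n(\omega))$ and invoke Tchakaloff's theorem (or equivalently a Carath\'eodory argument applied to the finite-dimensional cone of degree-$p$ homogeneous polynomials that arise as moment polynomials) to replace it by a finitely supported measure $\sum_{j=1}^N c_j \delta_{x^{(j)}}$ matching all mixed moments up to order $p$. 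The matching of moments forces equality of $p$-norms, and the atoms $(c_j^{1/p} x_i^{(j)})_{j=1}^N$ furnish an isometric embedding of $X$ into $\ell_p^N$.

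For part (2) this polynomial strategy fails because $|y|^p$ is no longer a polynomial, and I would instead exploit an integral representation of the $p$-norm in terms of characteristic functions. For $0 < p < 2$ non-even one has the Levy-type identity
\[
\|g\|_p^p = c_p \int_\mathbb{R} \frac{1 - \cos(tg)}{|t|^{p+1}}\, dt,
\]
with an analogous higher-difference formula for $p > 2$ non-even, which expresses $\|\sum a_i f_i\|_p^p$ as a singular integral of the characteristic functional of the vector $(f_1,\ldots,f_n)$. The forward direction is then essentially transparent: an embedding $X \hookrightarrow \ell_p = L_p(\mathbb{N},\#)$ realizes $X$ on a purely atomic space whose coordinate functions automatically have discrete distributions, and this property transfers to every unital isometric copy by the uniqueness statement below. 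The converse proceeds by restricting the underlying measure to the countable common range of a discretely-distributed unital representative, producing an atomic $L_p$-space that sits inside $\ell_p$. The role of the \emph{unital} normalization is to pin down the otherwise ambiguous (up-to-sign) realization, so that the implication can be stated as an equivalence between intrinsic and extrinsic properties of $X$.

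The crux of the argument, and the point I expect to be hardest, is the rigidity statement underlying part (2): two unital isometric copies of $X$ sitting in $L_p$-spaces over possibly different measures must have coordinate functions with the same joint distribution. The polynomial identities that trivialized the even case have no counterpart here, so one must genuinely invert the singular integral representation and recover distributional information from $p$-norm data alone. This inversion is precisely what fails for even $p$ and succeeds for non-even $p$: the hypothesis $\pnoteven$ ensures that the characteristic function of $(f_1,\ldots,f_n)$ is reconstructible from the $p$-th moments of its linear combinations, whereas for even $p$ only finitely many moments are accessible and rigidity breaks down.
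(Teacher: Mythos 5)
Note first that the paper offers no proof of this statement: it is quoted verbatim from Delbaen--Jarchow--Pe{\l}czy{\'n}ski \cite{DJP} (their Theorems~A and~B), so your sketch can only be judged on its own merits. Your outline of part~(1) is correct and is the standard route: for $p=2k$ the map $a\mapsto\|\sum a_if_i\|_p^p$ depends only on the order-$p$ moments of the pushforward of $\mu$ under $(f_1,\dots,f_n)$, and a Tchakaloff/Carath\'eodory argument in the finite-dimensional space of such moment functionals replaces that pushforward by a finitely supported measure with the same moments, yielding an isometric copy in $\ell_p^N$ --- which is even stronger than the stated conclusion.

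Part~(2) contains a genuine gap, and it sits exactly where you predict. Both directions of your argument (the transfer of discreteness to ``every'' unital copy in the forward direction, and the meaningfulness of the hypothesis in the converse) rest on the rigidity claim that unital isometric copies of $X$ have equimeasurable coordinate vectors. That claim is the Plotkin--Rudin equimeasurability theorem for $p\notin2\mathbb{N}$, and it is essentially the entire content of the non-even case; you name it as the crux but supply no proof. The inversion you gesture at is not a routine Fourier inversion: the identity $\|g\|_p^p=c_p\int_{\mathbb{R}}(1-\mathrm{Re}\,\hat\nu_g(t))|t|^{-p-1}\,dt$ gives you only a Riesz-type average of the characteristic function of the joint distribution along each direction, and recovering $\hat\nu$ from these averages requires Rudin's analytic-continuation argument in the exponent (or an injectivity argument for the associated distributional equation), neither of which appears in your sketch; for $p>2$ you also leave the ``higher-difference formula'' unstated. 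Separately, your converse is incomplete as written: knowing that each individual function in the unital copy has a discrete distribution does not immediately produce a ``countable common range'' --- for finite-dimensional $X$ one must argue that discreteness of all one-dimensional marginals forces the joint distribution to be discrete (intersecting countable unions of hyperplanes over $n$ independent directions), and for general closed $X\subset L_p$ one must show the generated sub-$\sigma$-algebra is purely atomic, as is done in \cite{DJP}. These points are fixable, but as it stands the hard half of the theorem is asserted rather than proved.
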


Since every subspace of $L_p$ is isometric to a unital subspace of $L_p$, (\cite{DJP}, Lemma 1.2), it follows that
 {\it if $\pnoteven$ then a subspace $X$ of $L_p$ is isometric to a subspace of $\ell_p$ if and only if
every two-dimensional subspace of $X$ is isometric to a subspace of $\ell_p$}. (\cite{DJP}, Corollary~1.7).

 Since $\ell_2$ is not isomorphic to a subspace of $\ell_p$ unless $p=2$, it follows that (\cite{DJP}, Corollary~1.8):
\begin{proposition}\label{hilbertCase} If $\pnoteven$, then the two-dimensional Hilbert space $\ell_2^2$ is
not isometric to a subspace of $\ell_p$.
\end{proposition}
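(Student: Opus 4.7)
The plan is to deduce Proposition~\ref{hilbertCase} directly from Corollary~1.7 of \cite{DJP} recalled above, together with the classical fact that $\ell_2$ is isomorphic to a subspace of $\ell_p$ only when $p=2$.

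First, I would realize $\ell_2$ isometrically inside $L_p$ via a Gaussian embedding. Letting $\{g_i\}_{i=1}^{\infty}$ be i.i.d.\ standard Gaussians and $c_p := \norma{g_1}_{L_p}$, for every $a=(a_i)\in\ell_2$ the sum $\sum_i a_i g_i$ is Gaussian with variance $\norma{a}_2^2$, so $\norma{\sum_i a_i g_i}_{L_p}=c_p\norma{a}_2$. The closed linear span $Y\subset L_p$ of $\{c_p^{-1}g_i\}$ is therefore isometric to $\ell_2$.

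Next, arguing by contradiction, assume $\isometricIn{\ell_2^2}{\ell_p}$. Every two-dimensional subspace of $Y$, being itself isometric to $\ell_2^2$, then embeds isometrically in $\ell_p$. Since $\pnoteven$, Corollary~1.7 of \cite{DJP} applies to $X=Y$ and yields $\isometricIn{Y}{\ell_p}$, hence $\isometricIn{\ell_2}{\ell_p}$---in particular an isomorphic embedding.

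Finally, $\pnoteven$ forces $p\neq 2$, and the classical result that $\ell_2$ is not isomorphic to any subspace of $\ell_p$ when $p\neq 2$ produces the required contradiction. The main (in fact only) obstacle is to invoke the correct black boxes: the Gaussian realization of $\ell_2$ inside $L_p$ and the isomorphic non-embedding of $\ell_2$ into $\ell_p$ for $p\neq 2$. Given these and Corollary~1.7, the argument is essentially a one-line deduction.
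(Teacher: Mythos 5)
Your argument is correct. The Gaussian realization of $\ell_2$ inside $L_p$, the fact that every two\-dimensional subspace of a Hilbert space is isometric to $\ell_2^2$, Corollary~1.7 of \cite{DJP}, and the classical isomorphic non-embedding of $\ell_2$ into $\ell_p$ for $p\neq 2$ do combine to give the contradiction, and all the black boxes you invoke are genuine. Note, however, that this is exactly the derivation the paper itself recalls in the introduction when attributing the Proposition to \cite{DJP} (Corollary~1.8); the proof the paper actually advertises as its own contribution is different. It goes through Theorem~\ref{mainTheorem}: a finite-dimensional space embeds isometrically in $\ell_p$ if and only if it admits a discrete Levy $p$-representation; for $\pnoteven$ the representing measure in a Levy $p$-representation is unique (Neyman, Kanter), and the unique Levy $p$-representation of the Euclidean norm on $\mathbb{R}^2$ is the rotation-invariant measure $c_{p,2}\sigma_{1}$ of Example~1, which is not discrete; hence $\notIsometricIn{\ell_2^2}{\ell_p}$. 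The comparison is instructive: your route rests on the deep infinite-dimensional Theorem~A of \cite{DJP} (via Corollary~1.7) together with isomorphic Banach space theory, whereas the paper's route is entirely finite-dimensional and needs only the uniqueness of the representing measure plus the elementary Theorem~\ref{mainTheorem}. Both are valid; the paper's is the shorter and more self-contained of the two, which is precisely the point the author is making.
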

Proposition~1 was apparently unknown until  A.~Pe{\l}czy{\'n}ski announced the results of~(\cite{DJP}) at the Convex Geometry meeting at Oberwolfach, 1997. A weaker result, with $\ell_p$ replaced by $\ell_p^n$, had been proved earlier by Y.~Lyubich (\cite{Lyub}).
Proposition~1 was mentioned in \cite{kolkon}, and more recently in \cite{Kilbane}.  
Theorem~2 below, which provides an alternative answer to Pietch's question, and captures another aspect of the connection between discrete distributions and isometric embeddings into $\ell_p$,
also provides a direct, short proof of Proposition~1.

\section{Preliminaries}
\label{sec:1}
\subsection{Notation}
Throughout this paper, $p$ denotes a positive, real number. For a measure space $(X,\Sigma,\mu)$ with $\mu$ positive, let $V_p=V_p(X,\Sigma,\mu)$ denote
the vector space of all $\Sigma$-measurable functions such that $\int_{X}|f(x)|^p\,d\mu(x)<\infty$, and $Z_p\subset V_p$ the subspace of all functions vanishing $\mu$-almost everywhere. 
The quotient space $V_p/Z_p$ is denoted by $L_p(X,\Sigma,\mu)$.  If $X=\mathbb{N}$, the set of all subsets of $\mathbb{N}$ is $\Sigma$ and $\mu$ is the counting measure, then the corresponding $L_p$ space is denoted by $\ell_p$. In what follows,
$L_p$ will denote the space $L_p([0,1],\Sigma,\lambda)$, where $\Sigma$ is the  Borel $\sigma$-algebra, and $\lambda$ --- the Lebesgue measure. The standard inner product in $\Rn$ is denoted by $\innerproduct{\cdot}{\cdot}$, and the
Euclidean norm by $\norma{\cdot}_2$. The unit-sphere in $\Rn$ is $\sphere{n-1}=\{u\in\Rn:\norma{u}_2=1\}$, and the normalized spherical measure on $\sphere{n-1}$ is denoted by $\sigma_{n-1}$.  All isometries in this note are between vector spaces, and
are linear, i.e., isometric isomorphisms. All vector spaces are over the field of the real numbers.
\subsection{The Levy $p$-representation}
\begin{definition}\label{levyRepDef}
\begin{enumerate}
\item  A finite dimensional normed space  $(F, \norma{\cdot})$ is said to have a \emph{Levy $p$-representation} 
if there exists a linear isomorphism $J:F\to\Rn$ ({$n=\textrm{dim}\,F$}) and an even measure $\xi$ on $\sphere{n-1}$ such that for every $x\in F$
\begin{equation}\label{levyRep}
\norma{x}^p=\int_{\sphere{n-1}}|\innerproduct{Jx}{v}|^p\,d\xi(v)
\end{equation}
\item The representation (\ref{levyRep}) is said to be \emph{discrete} if its support is discrete, i.e if
 there exists a sequence of positive numbers $\finiteSet{a}{j}{\infty}$ and a sequence
of points  $\finiteSet{v}{j}{\infty}$ in~$\sphere{n-1}$ such that 
$\xi=\sum_{j=1}^{\infty}a_j(\delta_{v_j}+\delta_{-v_j})$, where $\delta_v$ denotes the unit-mass measure concentrated at~${v\in\sphere{n-1}}$.

\end{enumerate}
\end{definition}

It is well known that every finite dimensional subspace of $L_p$ has a Levy $p$-representation. For a proof, see (\cite{kolBook}, Lemma~6.4).
Moreover, if $\pnoteven$, then the even measure $\xi$ in (\ref{levyRep}) is uniquely determined by
the norm, because in that case, the linear span of the functions $|\innerproduct{y}{\cdot}|^p$, with $y\in\Rn$, is dense in $C_e(\sphere{n-1})$, the Banach space of even, continuous functions on $\sphere{n-1}$. 
(\cite{Neyman}, Th. 2.1). \footnote {However, the uniqueness of $\xi$ in (\ref{levyRep}) for $\pnoteven$ was proved earlier in (\cite{Kanter}, Corollary~1)).}
If $\peven$, then the linear span of the functions $|\innerproduct{y}{\cdot}|^p$ coincides with the set of homogeneous polynomials of degree~$p$, a set not large enough
to uniquely determine the measure in (\ref{levyRep}). 

Different choices of an isomorphism $J$ in Definition~\ref{levyRepDef} lead to different measures in
the Levy $p$-representations. Nevertheless, a simple change-of-density argument shows that the property of a f.d. space, of having a discrete Levy $p$-representation, (i.e.,
a representation (\ref{levyRep}) with a discrete measure $\xi$), 
is invariant under isometries of~$F$. Note that in general this does not imply  that having one discrete Levy $p$-representation forces all Levy $p$-representations to be discrete. 
If $\peven$, then a finite dimensional subspace of $L_p$ may have a discrete Levy $p$-representation as well as a non-discrete one. In fact, if $\peven$ then \emph{every} f.d subspace of $L_p$ \emph{must have} a discrete Levy $p$-representation. 
This is Corollary~\ref{evenCase} below. However, it is easy to construct concrete examples, the simplest of which is a discrete Levy $2$-representation
of $\ell_2^n$.
\begin{example}
For every $p>0$ and $n\geq 1$, the invariance of $\sigma_{n-1}$ under orthogonal transformations shows that for every ${u\in\Rn}$
\begin{equation}\label{el2Rep}
\norma{u}_2^p=c_{p,n}\int_{\sphere{n-1}}|\innerproduct{u}{v}|^p\,d\sigma_{n-1}(v)
\end{equation}
where $c_{p,n}=\int_{\sphere{n-1}}|v_1|^p\,d\sigma_{n-1}(v)$, with $v_1$ denoting the first coordinate of $v$ with respect to the standard basis. Take $p=2$, and let $\finiteSet{v}{i}{n}$ be any orthonormal basis
in~$\Rn$. Let $\mu$ be the discrete measure on $\sphere{n-1} $ concentrated at $\{\pm v_i\}_{i=1}^n$, assigning mass $\frac{1}{2}$ to each one of the $2n$ points. Then for every $u\in\Rn$
$$\norma{u}_2^2=\sum_{i=1}^n|\innerproduct{u}{v_i}|^2=\int_{\sphere{n-1}}|\innerproduct{u}{v}|^2\,d\mu(v)$$
Thus the discrete measure $\mu$ and the invariant measure $c_{2,n}\sigma_{n-1}$ both provide a Levy $2$-representation of $\ell_2^n$.

\end{example}

\section{An alternative answer to Pietch's question}
\label{sec:2}
\begin{theorem}\label{mainTheorem}
A finite dimensional normed space  is isometric to a subspace of $\ell_p$ if and only if
it has a discrete Levy $p$-representation.
\end{theorem}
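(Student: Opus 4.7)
The plan is to establish a direct dictionary between an isometric embedding $T\colon F\to\ell_p$ and a discrete, even measure $\xi$ on $\sphere{n-1}$: each nonzero coordinate of $T$ corresponds to an antipodal pair of point masses of $\xi$, and conversely every point mass in $\xi$ gives a coordinate of the embedding. Both directions reduce to reading this correspondence in the appropriate direction.

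For the ``if'' direction, suppose $F$ has a discrete Levy $p$-representation with atoms $\finiteSet{v}{j}{\infty}\subset\sphere{n-1}$, weights $\finiteSet{a}{j}{\infty}$, and linear isomorphism $J\colon F\to\Rn$. I would define
$$T\colon F\to\ell_p,\qquad (Tx)_j=(2a_j)^{1/p}\innerproduct{Jx}{v_j},$$
and verify directly from (\ref{levyRep}) that $\sum_j|(Tx)_j|^p=\int_{\sphere{n-1}}|\innerproduct{Jx}{v}|^p\,d\xi(v)=\norma{x}^p$, so that $T$ is a linear isometry into $\ell_p$. This step is immediate once the definition is unravelled; the factor $2a_j$ accounts for the combined mass of the antipodal pair.

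For the converse, fix an isometric embedding $T\colon F\to\ell_p$ and an arbitrary linear isomorphism $J\colon F\to\Rn$. Each coordinate $x\mapsto (Tx)_j$ is a linear functional on $F$, hence of the form $x\mapsto\innerproduct{Jx}{w_j}$ for a unique $w_j\in\Rn$. After discarding indices with $w_j=0$, I would write $w_j=\norma{w_j}_2 v_j$ with $v_j\in\sphere{n-1}$, set $a_j=\frac{1}{2}\norma{w_j}_2^p$, and define
$$\xi=\sum_{j}a_j\bigl(\delta_{v_j}+\delta_{-v_j}\bigr).$$
Then $\xi$ is an even, discrete measure, and by construction $\int_{\sphere{n-1}}|\innerproduct{Jx}{v}|^p\,d\xi(v)=\sum_j \norma{w_j}_2^p|\innerproduct{Jx}{v_j}|^p=\sum_j|(Tx)_j|^p=\norma{x}^p$.

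The main obstacle --- modest but genuine --- is verifying that $\xi$ is a \emph{finite} Borel measure on $\sphere{n-1}$, i.e., that $\sum_j \norma{w_j}_2^p<\infty$. I would handle this by applying the preceding identity with $x=J^{-1}e_i$ for each standard basis vector $e_i$, obtaining $\sum_j \norma{w_j}_2^p|(v_j)_i|^p=\norma{J^{-1}e_i}^p<\infty$. Summing over $i=1,\ldots,n$ gives $\sum_j \norma{w_j}_2^p\bigl(\sum_i|(v_j)_i|^p\bigr)<\infty$; since $\sum_i|(v_j)_i|^p$ is bounded below on $\sphere{n-1}$ by a positive constant depending only on $n$ and $p$, finiteness of $\sum_j \norma{w_j}_2^p$ follows, completing the argument.
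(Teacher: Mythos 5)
Your proposal is correct and takes essentially the same route as the paper: atoms of $\xi$ become coordinates of the embedding and vice versa, with the converse obtained by reading off the coordinate functionals of $T$ as inner products with vectors $w_j$ and normalizing them onto $\sphere{n-1}$. You are in fact slightly more careful than the paper on two minor points (the factor of $2$ from the antipodal pair and the finiteness of $\sum_j\norma{w_j}_2^p$), both handled correctly.
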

\begin{proof}
Fix an $n$-dimensional normed space $F$. If $F$ has a discrete Levy $p$-representation, then there exists an isomorphism $J:F\to\Rn$, a sequence of points $\{v_j\}_{j=1}^{\infty}$ in $\sphere{n-1}$ and a sequence of positive numbers $\finiteSet{a}{j}{\infty}$ such that  for every $x\in F$
$$\norma{x}^p=\int_{\sphere{n-1}}|\innerproduct{J(x)}{v}|^p\,d\mu(v)=\sum_{j=1}^{\infty}a_j|\innerproduct{J(x)}{v_j}|^p$$
The map $T:F\to\ell_p$ defined by 
$$T(x):=\sum_{j=1}^{\infty}a_j^{1/p}\innerproduct{J(x)}{v_j}e_j,\qquad (\forall x\in F)$$
is an isometry from $F$ into $\ell_p$. 

Conversely, assume there exists an isometry $T:F\to\ell_p$. Let ${J:F\to\Rn}$
be an isomorphism. Put $K=J(B_F)$, where $B_F$ is the unit-ball of $F$. The map $S=T\circ J^{-1}$ is then an isometry from $(\Rn,\norma{\cdot}_K)$ into $\ell_p$.
Let $f_1,\dots, f_n$ denote the standard basis in~$\Rn$, and $\finiteSet{e}{j}{\infty}$ the standard basis in $\ell_p$.
For each $1\leq k\leq n$, there exists a sequence ${\{x_{k,j}\}_{j=1}^{\infty}\in\ell_p}$ such that $S(f_k)=\sum_{j=1}^{\infty}x_{k,j}e_j$.
Put ${v_j=(x_{1,j},x_{2,j},\dots , x_{n,j})}$. Then for every ${u\in\Rn}$, 
$$S(u)=S\left(\sum_{k=1}^n\innerproduct{u}{f_k}f_k\right)=\sum_{k=1}^n\innerproduct{u}{f_k}\sum_{j=1}^{\infty}x_{k,j}e_j=\sum_{j=1}^{\infty}\innerproduct{u}{v_j}e_j$$
Hence, since $S$ is a linear isometry,
\begin{equation}\label{isometry}
\norma{u}_K^p=\norma{S(u)}_p^p=\sum_j|\innerproduct{u}{v_j}|^p\qquad (\forall u\in\Rn)
\end{equation}
Put $I=\{j:v_j\neq 0\}$. Define an even measure $\xi$ on $\sphere{n-1}$ that assigns mass $\frac{1}{2}\norma{v_j}^p_2$ to the points $\pm \frac{v_j}{\norma{v_j}_2}$ if $j\in I$, and zero otherwise. Then (\ref{isometry}) can be rewritten as
$$\norma{u}_K^p=\int_{\sphere{n-1}}|\innerproduct{u}{v}|^p\,d\xi(v)\qquad (\forall u\in\Rn)$$
Therefore, for every $x\in F$, as $J:F\to (\Rn,\norma{\cdot}_K)$ is an isometry,
$$\norma{x}^p=\norma{J(x)}_K^p=\int_{\sphere{n-1}}|\innerproduct{J(x)}{v}|^p\,d\xi(v)$$
Hence $F$ has a discrete Levy $p$-representation.\qed
\end{proof}
{\bf Remark 1} Theorem~\ref{mainTheorem} 
 provides a simple answer to Pietch's question in terms of the measures appearing in the Levy $p$-representation of finite dimensional subspaces of~$L_p$.
It should be noted that the deep part of Theorem 1 deals with infinite-dimensional subspaces of $L_p$, of which Theorem~2 remains silent. However, Theorem~2 provides a quick way to prove Proposition~1,
which follows as an immediate corollary, because for $\pnoteven$ the unique Levy $p$-representation of the finite-dimensional Euclidean norm is not discrete.

Another corollary, already mentioned above, follows immediately from Theorem~\ref{DJPthm}, part~1 and Theorem~\ref{mainTheorem}.
\begin{corollary}\label{evenCase}
If $\peven$, then every finite dimensional subspace of $L_p$ has a discrete Levy $p$-representation.
\end{corollary}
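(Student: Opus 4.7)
The plan is to chain the two results the corollary cites. Let $F$ be a finite dimensional subspace of $L_p$ with $\peven$. First, I would invoke Theorem~\ref{DJPthm}, part~1, which asserts that for even integers $p$ every finite dimensional subspace of $L_p$ is isometric to a subspace of $\ell_p$. In particular $F$ is isometric to a subspace of $\ell_p$.

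Second, I would feed this conclusion into Theorem~\ref{mainTheorem}: the ``only if'' direction of that equivalence says that any finite dimensional normed space which is isometric to a subspace of $\ell_p$ has a discrete Levy $p$-representation. Applied to $F$, this yields exactly the claim.

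There is no real obstacle here since both ingredients are already available; the proof is a one-step composition. The only minor subtlety worth a single sentence of comment is that having a discrete Levy $p$-representation is a property of the isometry class of $F$ (as noted in the discussion following Definition~\ref{levyRepDef}), so it does not matter that Theorem~\ref{mainTheorem} produces the representation via an isomorphism onto $\Rn$ depending on the embedding into $\ell_p$ rather than intrinsically from the realization of $F$ inside $L_p$. With that remark in place the statement is immediate.
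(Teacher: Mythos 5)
Your proposal is correct and matches the paper's own argument exactly: the corollary is stated there as an immediate consequence of Theorem~\ref{DJPthm}, part~1, combined with Theorem~\ref{mainTheorem}, which is precisely the two-step composition you describe. Your additional remark on the isometry-invariance of having a discrete Levy $p$-representation is a reasonable clarification but not needed beyond what the paper already notes.
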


\section{Applications}
\label{sec:3}
Throughout this section, it will be convenient to denote the statement "\emph{$X$ isometrically embeds in $Y$}" by the notation $\isometricIn{X}{Y}$, and the negation of the same statement by $\notIsometricIn{X}{Y}$.

In \cite{Dor}, Dor gave a complete answer to the question --- \emph{for which $p,q$ is it true that  $\isometricIn{\ell_q^n}{L_p}$}? Here is his answer (\cite{Dor}, Theorem 2.1).
\begin{theorem}\label{DorTheorem}(Dor's Theorem)
Assume $n\geq 2$, $p\geq 1$  and $q\geq 1$. Then $\isometricIn{\ell_q^n}{L_p}$ only in one of the following situations:
\begin{itemize}
\item[(a)] $p<q\leq 2$.
\item[(b)] $q=2$.
\item[(c)] $p=q$.
\item[(d)] $n=2, p=1$, and any $q\in[1,\infty]$.
\end{itemize}
\end{theorem}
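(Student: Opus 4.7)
The plan is to split Dor's theorem into the existence direction (the listed cases do yield embeddings) and the non-existence direction (no other cases do), and further subdivide the latter by whether $p\le 2$ or $p>2$.

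For existence, case (c) is immediate via step functions. Cases (a) and (b) both follow from the standard stable-random-variable construction: let $S_1,\dots,S_n$ be independent symmetric $q$-stable random variables (Gaussians when $q=2$), so that $\sum_{k=1}^n a_kS_k$ has the same distribution as $\norma{a}_q S_1$; since $E|S_1|^p$ is finite exactly when $p<q$ for $q<2$ (and for every $p$ when $q=2$), the map $e_k\mapsto cS_k$, suitably normalized, is an isometry $\isometricIn{\ell_q^n}{L_p}$. Case (d) uses the classical cosine-transform representation $\norma{(x_1,x_2)}=\int_0^\pi|x_1\cos\theta+x_2\sin\theta|\,d\nu(\theta)$ available for every two-dimensional norm, which exhibits it as a subspace of $L_1([0,\pi],\nu)$ and hence of $L_1$.

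For the non-embedding direction with $0<p\le 2$, I would invoke the Schoenberg/Bretagnolle--Dacunha-Castelle--Krivine criterion: a finite-dimensional norm $\norma{\cdot}$ on $\Rn$ embeds isometrically in $L_p$ if and only if $\exp(-t\norma{x}^p)$ is positive definite on $\Rn$ for every $t>0$. Applied to two-dimensional coordinate sections of $\norma{\cdot}_q$ and combined with Bochner's theorem and a Fourier-transform calculation, this rules out $q>2$ except in the special situation $n=2,\,p=1$ (covered by case (d)), pinning down the admissible triples in this range.

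The main obstacle is the range $p>2$ with $q\notin\{2,p\}$, where the exponential criterion is no longer available. Here the approach is to use the Levy $p$-representation: any embedding $\isometricIn{\ell_q^n}{L_p}$ yields an even measure $\xi$ on $\sphere{n-1}$ with $\norma{x}_q^p=\int_{\sphere{n-1}}|\innerproduct{x}{v}|^p\,d\xi(v)$. When $\pnoteven$, uniqueness of $\xi$ (Neyman/Kanter, cited earlier in the excerpt) together with the symmetry group of $\ell_q^n$ forces $\xi$ to be invariant under all sign changes and coordinate permutations; a spherical-harmonic expansion of both sides then shows the displayed identity can hold only when $q=2$ or $q=p$. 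When $\peven$, the identity makes $\bigl(\sum_i|x_i|^q\bigr)^{p/q}$ a homogeneous polynomial of degree $p$ in $x$, which forces $p/q\in\mathbb{N}$ and $q\in 2\mathbb{N}$, and comparing coefficients along a single coordinate axis (using positivity of $\xi$) leaves only $q=2$ or $q=p$. I expect the symmetry-plus-spherical-harmonics step in the $\pnoteven$ case to be the technically most delicate, since one must argue carefully that no nontrivial harmonic mode survives the pairing with $|\innerproduct{x}{v}|^p$ against a norm whose $p$-th power matches $\norma{x}_q^p$ only when $q\in\{2,p\}$.
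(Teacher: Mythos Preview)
The paper does not prove this theorem: Dor's theorem is simply quoted from \cite{Dor} (Theorem~2.1 there) and then used as a black box in the proof of Theorem~\ref{application}. There is therefore no argument in the paper to compare your proposal against.

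Judged on its own, your existence direction is correct and standard. The non-existence direction, however, is where all the content lies, and your outline names the right tools but does not carry out the decisive steps. For $p\le 2$ you invoke the Schoenberg/BDK criterion but give no indication of the actual computation, and you omit the case $1\le q<p\le 2$, which must also be excluded. For $\peven$ your observation that $\bigl(\sum_i|x_i|^q\bigr)^{p/q}$ must be a homogeneous polynomial of degree $p$ is right and does force $q\in 2\mathbb{N}$ with $q\mid p$, but ``comparing coefficients along a single coordinate axis'' cannot finish the job: along $x=te_1$ both sides reduce to $|t|^p$ regardless of $q$, so nothing is learned there. What actually works (already for $n=2$, to which one may restrict) is to look at the coefficient of $x_1^{p-2}x_2^{2}$, which vanishes on the left when $q>2$ but equals ${p\choose 2}\int v_1^{p-2}v_2^{2}\,d\xi$ on the right; positivity of $\xi$ then forces $\xi$ to be supported on the coordinate axes, and this is incompatible with the nonzero coefficient of $x_1^{p-q}x_2^{q}$ unless $q=p$. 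Finally, for $p>2$ with $\pnoteven$ you simply assert that a spherical-harmonic expansion ``forces $q\in\{2,p\}$''; that assertion \emph{is} the theorem in this range, and nothing in your outline explains how the harmonic analysis would actually go.
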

As an application of Theorem 2 above, we have an "$\ell_p$ version" of Dor's theorem, which turns out to be
much more restrictive.
\begin{theorem}\label{application}
Assume $n\geq 2$, $p\geq 1$  and $q\geq 1$. 
\begin{itemize}
\item[(a)]  If $q\neq 2$, then $\notIsometricIn{\ell_q^n}{\ell_p}$ unless $p=q$.
\item[(b)] $\notIsometricIn{\ell_2^n}{\ell_p}$ unless $p\in 2\mathbb{N}$. 
\end{itemize}
\end{theorem}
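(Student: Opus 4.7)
\textbf{Proof proposal for Theorem~\ref{application}.} The plan is to combine Theorem~\ref{mainTheorem} with Theorem~\ref{DorTheorem} and the uniqueness of the Levy $p$-representation for $\pnoteven$ recalled after Definition~\ref{levyRepDef}. In each case the task reduces to exhibiting, with the same ambient isomorphism, one Levy $p$-representation of $\ell_q^n$ that is not discrete; uniqueness then rules out any discrete one, and Theorem~\ref{mainTheorem} forbids the embedding into $\ell_p$.

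For part (b) I would simply observe that \eqref{el2Rep} already furnishes a Levy $p$-representation of $\ell_2^n$ by the rotation-invariant measure $c_{p,n}\sigma_{n-1}$, which is manifestly not discrete. When $\pnoteven$, uniqueness makes this the only Levy $p$-representation, so $\ell_2^n$ has no discrete one and Theorem~\ref{mainTheorem} yields $\notIsometricIn{\ell_2^n}{\ell_p}$.

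For part (a) I would argue by contradiction. Assume $\isometricIn{\ell_q^n}{\ell_p}$ with $q\neq 2$ and $p\neq q$. Then $\isometricIn{\ell_q^n}{L_p}$, so Theorem~\ref{DorTheorem} eliminates cases (b) and (c) and leaves either Dor's case (a), with $p<q<2$, or Dor's case (d), with $n=2$ and $p=1$; in both $p<2$, hence $\pnoteven$. Theorem~\ref{mainTheorem} provides a discrete Levy $p$-representation, and I would construct a second, non-discrete one and invoke uniqueness. In Dor's case (a), let $X=(X_1,\dots,X_n)$ have i.i.d.\ symmetric $q$-stable coordinates. Then $\innerproduct{x}{X}$ is $q$-stable with scale $\norma{x}_q$, so $\mathbb{E}|\innerproduct{x}{X}|^p=c_{p,q}\norma{x}_q^p$ because $p<q$; rewriting this integral in polar coordinates on $\Rn$ and integrating out the radius yields $\norma{x}_q^p=\int_{\sphere{n-1}}|\innerproduct{x}{u}|^p g(u)\,d\sigma_{n-1}(u)$ with $g\in L^1(\sigma_{n-1})$ and $g>0$ almost everywhere, because the joint density of $X$ is strictly positive on $\Rn$. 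The resulting measure is absolutely continuous, hence not discrete. In Dor's case (d), I would use the classical fact that every centrally symmetric convex body in $\mathbb{R}^2$ is a zonoid, so $\norma{x}_q=\int_{\sphere{1}}|\innerproduct{x}{u}|\,d\mu_q(u)$ for some positive measure $\mu_q$; inverting the cosine transform on $\sphere{1}$ (a Fourier multiplier which vanishes only at odd frequencies) and combining with $C^\infty$ regularity of $\norma{\cdot}_q$ on $\sphere{1}$ for $1<q<\infty$ gives $\mu_q$ with a smooth density against $\sigma_1$, once more non-discrete. Uniqueness of the Levy $p$-representation then clashes with the discrete one supplied by Theorem~\ref{mainTheorem}.

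The step I expect to be the main obstacle is Dor's case (d) with $q>2$: the stable-law construction is unavailable there, so the non-discreteness of a Levy $1$-representation of $\ell_q^2$ has to be extracted from the two-dimensional cosine-transform inversion, which requires verifying that analytic regularity of the norm $\norma{\cdot}_q$ on $\sphere{1}$ passes to absolute continuity of the generating measure of the zonoid $B_{q'}\subset\mathbb{R}^2$. Once that smoothing-order-two inversion is in hand, the uniqueness of the Levy $1$-representation closes the argument.
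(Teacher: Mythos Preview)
Your strategy coincides with the paper's: reduce via Dor's theorem, treat $p<q<2$ with the $q$-stable density (the paper's case~(ii)), and treat $n=2$, $p=1$, $q>2$ by inverting the cosine transform on $\sphere{1}$. The paper isolates this last step as Lemma~\ref{theLemma}, supplying the explicit identity
\[
G(\varphi)=\frac{1}{4}\int_0^{2\pi}|\cos(\varphi-\theta)|\,(G+G'')(\theta-\pi/2)\,d\theta,
\]
so that $C^2$ regularity of $G(\theta)=\norma{(\cos\theta,\sin\theta)}$ already forces a continuous (hence non-discrete) Levy $1$-density. One correction to your write-up: $\norma{\cdot}_q$ is \emph{not} $C^\infty$ on $\sphere{1}$ for general $1<q<\infty$---for $1<q<2$ it is only $C^1$, and for non-even $q>2$ it is only $C^{\lfloor q\rfloor}$---but the only sub-case of Dor's~(d) not already absorbed by Dor's~(a) is $q>2$, where $C^2$ does hold, and that is exactly what the inversion requires. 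Your part~(b) via~(\ref{el2Rep}) is precisely the direct route the paper points out in Remark~1.
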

The case of $p=1$ is rather special, because there is much more information regarding the Levy $1$-representation
compared with Levy $p$-representations where $p>1$. For $p=1$, a striking difference between Dor's Theorem and Theorem~\ref{application} emerges: while every two-dimensional normed space
embeds isometrically into $L_1$, many two-dimensional norms do not embed isometrically in $\ell_1$. We will need the next Lemma in the proof
of Theorem~\ref{application}.
 \begin{lemma}\label{theLemma}
Let $\norma{\cdot}$ be a norm on $\mathbb{R}^2$ such that the function
$$G(\theta)=\norma{(\cos\theta,\sin\theta)},\quad 0\leq\theta\leq 2\pi$$
is of class $C^2$. Then $(\mathbb{R}^2,\norma{\cdot})$ is not isometric to a subspace of $\ell_1$.
\end{lemma}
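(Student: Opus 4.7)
The plan is to apply Theorem~\ref{mainTheorem} to convert the assumed embedding $\isometricIn{(\mathbb{R}^2,\norma{\cdot})}{\ell_1}$ into a concrete series representation for the boundary function $G$, and then to read a corner of $G$ off this representation, which will contradict $G\in C^2$ (in fact $C^1$ alone already suffices).

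Suppose toward contradiction that $(\mathbb{R}^2,\norma{\cdot})$ embeds isometrically into $\ell_1$. By Theorem~\ref{mainTheorem} there exist a linear isomorphism $J:\mathbb{R}^2\to\mathbb{R}^2$, a sequence $\{u_j\}\subset\sphere{1}$, and positive weights $\{c_j\}$ such that $\norma{x}=\sum_j c_j\,|\innerproduct{J(x)}{u_j}|$ for every $x\in\mathbb{R}^2$. Using $\innerproduct{J(x)}{u_j}=\innerproduct{x}{J^{\!\top}u_j}$, normalising each $J^{\!\top}u_j$ to lie on $\sphere{1}$, and merging pairs of antipodal directions (which contribute identical $|\cos|$-terms), I obtain a representation
$$G(\theta)=\sum_{j=1}^{N} b_j\,|\cos(\theta-\phi_j)|,\qquad N\leq\infty,$$
with $b_j>0$ and with the angles $\phi_j\in[0,\pi)$ pairwise distinct. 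Evaluating at $\theta=0$ and $\theta=\pi/2$ yields $\sum_j b_j(|\cos\phi_j|+|\sin\phi_j|)<\infty$, hence $\sum_j b_j<\infty$, since $|\cos\phi|+|\sin\phi|\geq 1$ on $[0,\pi)$.

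Now fix any $j_0$ and set $\theta_0=\phi_{j_0}+\pi/2$. The summand $g_{j_0}(\theta):=|\cos(\theta-\phi_{j_0})|$ has one-sided derivatives $\pm 1$ at $\theta_0$, producing a corner of jump $2$. For every $j\neq j_0$, distinctness of the $\phi_j$ modulo $\pi$ places $\theta_0$ strictly inside a smooth branch of $g_j$, so $g_j$ is $C^\infty$ in a neighbourhood of $\theta_0$. Each $g_j$ is $1$-Lipschitz on $\mathbb{R}$, so by dominated convergence with respect to the weighted counting measure $\{b_j\}$ (using $\sum_j b_j<\infty$) one may interchange $\lim_{h\to0^\pm}$ with the summation in the difference quotient of $G$ at $\theta_0$, giving
$$G'_+(\theta_0)-G'_-(\theta_0)=\sum_j b_j\bigl[(g_j)'_+(\theta_0)-(g_j)'_-(\theta_0)\bigr]=2b_{j_0}>0.$$
Hence $G$ is not even $C^1$ at $\theta_0$, contradicting the hypothesis.

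The only non-routine bookkeeping is the reduction to the canonical form with $\phi_j$'s distinct modulo $\pi$ — this is exactly what ensures that the corners of the individual summands sit at different points of the circle and cannot cancel — together with the dominated-convergence interchange of one-sided limit and infinite sum. The geometric content is the elementary observation that $|\cos|$ has a corner at $\pm\pi/2$, which cannot be smoothed out by superposition with other terms that are smooth there. The $p=1$ nature of $\ell_1$ enters precisely at this point, since for $p>1$ the building blocks $|\innerproduct{x}{v_j}|^p$ are differentiable and the argument breaks.
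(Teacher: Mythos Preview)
Your argument is correct and takes a genuinely different route from the paper's proof. The paper exploits the classical integral formula
\[
G(\varphi)=\tfrac{1}{4}\int_0^{2\pi}|\cos(\varphi-\theta)|\bigl[G(\theta-\pi/2)+G''(\theta-\pi/2)\bigr]\,d\theta,
\]
valid for $G\in C^2$, which exhibits the Levy $1$-representing measure as one with continuous density $G+G''$; the paper then invokes the \emph{uniqueness} of the Levy $1$-representation (available because $1\notin 2\mathbb{N}$) to conclude that no discrete representation can exist, and finally applies Theorem~\ref{mainTheorem}. Your proof bypasses both the integral formula and the uniqueness theorem: starting from the discrete representation furnished by Theorem~\ref{mainTheorem}, you show directly via a dominated-convergence argument that a single positive term $b_{j_0}|\cos(\theta-\phi_{j_0})|$ forces a genuine corner of $G'$ at $\phi_{j_0}+\pi/2$, since all other summands are smooth there. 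This is more elementary, and---as you note---already works under the weaker hypothesis $G\in C^1$; the paper only obtains that sharpening in Remark~2 by reinterpreting $G''$ distributionally. What the paper's approach buys in return is an explicit identification of the representing measure, which is of independent interest but not needed for the lemma as stated.
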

\begin{proof}
We can write for every $0\leq\varphi\leq 2\pi$,
\begin{equation}\label{1Levy}
G(\varphi)=\frac{1}{4}\int_0^{2\pi}|\cos(\varphi-\theta)|\left[G(\theta-\pi/2)+G''(\theta-\pi/2)\right]\,d\theta,
\end{equation}
as can be verified by partial integration. (\cite{Schneider}, p.195).  Observe that (\ref{1Levy}) is
nothing but the unique Levy $1$-representation of $(\mathbb{R}^2,\norma{\cdot})$ (with respect to a fixed basis), with an even measure on $\mathbb{S}^1$ whose density with respect to $d\theta/4$ is the continuous, nonnegative function ${(G+G'')(\theta-\pi/2)}$.
Hence if $G$ is of class $C^2$, then  $(\mathbb{R}^2,\norma{\cdot})$ does not have a discrete Levy $1$-representation. By Theorem~2, it is does not embed isometrically in~$\ell_1$.

\end{proof}
\begin{proof} {\it of Theorem~\ref{application}}.

Part (b) is not new, and is added for the sake of completeness. It is an immediate consequence of part~$1$ of Theorem~\ref{DJPthm} and Proposition~\ref{hilbertCase}. 
Let us prove part~(a). 

Assume $q\neq 2$. The cases $q>2$ and $q<2$ are treated separately.
\begin{itemize}
\item[(i)] $q>2$. Assume $\isometricIn{\ell_q^2}{\ell_p}$. Since $\isometricIn{\ell_p}{L_p}$, Dor's theorem implies that ${p=1}$, or ${p=q}$. But ${p=1}$ contradicts Lemma~\ref{theLemma},
because the $\ell_q^2$-norm is of class $C^2$ for $q>2$.
Therefore, if $q>2$ then $\notIsometricIn{\ell_q^n}{\ell_p}$ unless $p=q$.
\item[(ii)] $q<2$. Excluding the trivial case $p=q$, Dor's theorem implies we need only consider $1\leq p < q$. It is well known that for $1\leq p<q<2$, the space $\ell_q^n$ has a Levy $p$-representation obtained by
means of the standard $n$-dimensional $q$-stable measure $\mu_{q}$, whose density is the Fourier transform of the function $\exp(-\norma{x}_q^q)$. In fact, one has for every $x\in\mathbb{R}^n$,
\begin{equation}\label{qstable}
\norma{x}_q^p=\frac{1}{\int_{\mathbb{R}}|t|^p\,d\nu_q(t)}\int_{\mathbb{R}^n}|\innerproduct{x}{\xi}|^p\,d\mu_q(\xi)
\end{equation}
where $\nu_q$ is the $1$-dimensional $q$-stable measure. That the one-dimensional integral appearing in (\ref{qstable}) is finite for $1\leq p<q<2$ is a classical fact. An exact calculation
of that integral appeared in \cite{kol91} (ibid. p. 762;  an alternative proof, without calculating the exact value, can be found in \cite{Woj}, Proposition 15, p.~93). 
A Levy $p$-representation of $\ell_q^n$ (in the sense of definition (\ref{levyRepDef}) above)  is readily derived from (\ref{qstable}) by
projecting  $\mu_q$ from $\mathbb{R}^n$ onto the sphere $\mathbb{S}^{n-1}$. More precisely, for each Borel
subset $B\subset\mathbb{S}^{n-1}$, let $\tilde{B}\subset\mathbb{R}^n$ denote the union of all lines in $\mathbb{R}^n$ passing through~$B$, that is, $\tilde{B}=\{tu: t\in\mathbb{R}, u\in B\}$, and define
\begin{equation}\label{pProj}
\nu_q(B)=\frac{1}{2}\int_{\tilde{B}}\norma{x}_2^p\,d\mu_q(x),
\end{equation}
where $\norma{x}_2$ is the Euclidean norm of $x$.
Then one has
\begin{equation}\label{pqLevy}
\int_{\mathbb{R}^n}|\innerproduct{x}{\xi}|^p\,d\mu_q(\xi)=\int_{\mathbb{S}^{n-1}}|\innerproduct{x}{u}|^p\,d\nu_q(u),
\end{equation}
which, combined with (\ref{qstable}) yields a Levy $p$-representation of the space $\ell_q^n$ . In \cite{kol91}, the measure $\nu_q$ in (\ref{pProj}) is called the \emph{$p$-projection of $\mu_q$}. (ibid. p.~760).
  Since the standard $n$-dimensional $q$-stable measure $\mu_q$ has
a smooth density, namely, the Fourier transform of $\exp(-\norma{x}_q^q)$, the integral in~(\ref{pProj}) must vanish along every one-dimensional line in $\mathbb{R}^n$, and in particular, its $p$-projection $\nu_q$ cannot be discrete; for if $u_0\in\mathbb{S}^{n-1}$ is a point to which $\nu_q$ assigns positive mass, then the integral in~(\ref{pProj}), 
taken over the line ${\{tu_0:t\in\mathbb{R}\}}$ would be positive. Since $1\leq  p<2$, the Levy $p$-representation of the $\ell_q^n$-norm is unique. Hence the $\ell_q^n$-norm does not have a discrete Levy $p$-representation. It follows from Theorem~\ref{mainTheorem} that
 $\notIsometricIn{\ell_q^n}{\ell_p}$. 
\end{itemize}
\bigskip
Summarizing the above, we find that if $q\neq 2$ then $\notIsometricIn{\ell_q^n}{\ell_p}$ unless $p=q$, and $\notIsometricIn{\ell_2^n}{\ell_p}$ unless $p$ is an even integer. This completes the proof of Theorem~\ref{application}.

\end{proof}
Utilizing the main theme of this section we can prove an additional result, which places restrictions on the smoothness of finite dimensional subspaces of $\ell_p$.
\begin{proposition}
Fix a number $p\geq 1$, $p\notin 2\mathbb{N}$. Let $\norma{\cdot}$ be a norm in $\mathbb{R}^n$ whose restriction to $\mathbb{S}^{n-1}$ is of class $C^{2r}$, where $r\in\mathbb{N}$ and $2r>n+p$. Then 
$(\mathbb{R}^n,\norma{\cdot})$ does not embed isometrically in $\ell_p$.
\end{proposition}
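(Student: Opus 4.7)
The plan is to apply Theorem~\ref{mainTheorem} in its contrapositive form. If $(\mathbb{R}^n,\norma{\cdot})$ embedded isometrically in $\ell_p\subset L_p$, then it would have a Levy $p$-representation; since $\pnoteven$, this representation is unique. Hence it suffices to show that the unique Levy $p$-representation of $\norma{\cdot}$ is absolutely continuous with respect to $\sigma_{n-1}$, which forbids it from being discrete and thus rules out the embedding.

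The central tool is the spherical cosine transform of order $p$, $(C_p\mu)(u):=\int_{\sphere{n-1}}|\innerproduct{u}{v}|^p\,d\mu(v)$. A Levy $p$-representation of $\norma{\cdot}$ is an even measure $\xi$ on $\sphere{n-1}$ solving $C_p\xi=G$, where $G:=\norma{\cdot}^p$ viewed on $\sphere{n-1}$. Because $C_p$ commutes with orthogonal transformations, the Funk--Hecke formula diagonalizes it on spherical harmonics: on the space $\mathcal{H}_k$ of degree-$k$ spherical harmonics, $C_p$ acts as multiplication by an eigenvalue $\lambda_k(n,p)$; parity forces $\lambda_k=0$ for odd $k$, and for even $k=2m$ a standard computation using Rodrigues' formula for ultraspherical polynomials expresses $\lambda_{2m}$ as a ratio of Gamma functions that is nonzero precisely because $\pnoteven$ (the pole of the numerator would require $p$ to be a non-negative even integer). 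Stirling's formula yields $\lambda_{2m}\asymp m^{-\beta}$ as $m\to\infty$ for an explicit exponent $\beta=\beta(n,p)$.

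Next, expand $G=\sum_{m\geq 0}G_{2m}$ in spherical harmonics; the odd components vanish by evenness of $G$. The hypothesis $G\in C^{2r}(\sphere{n-1})$ translates, via iterated application of the Laplace--Beltrami operator (whose eigenvalue on $\mathcal{H}_k$ is $k(k+n-2)$) and the pointwise bound $\norma{G_k}_\infty\leq(\dim\mathcal{H}_k)^{1/2}\norma{G_k}_2$ coming from the zonal reproducing kernel, into decay estimates of the form $\norma{G_{2m}}_\infty=O(m^{-2r+(n-2)/2+\eps})$ for every $\eps>0$. The formal inverse $h:=\sum_m\lambda_{2m}^{-1}G_{2m}$ then converges absolutely and uniformly on $\sphere{n-1}$ provided $2r>\beta+n/2$; computing $\beta$ explicitly shows that the hypothesis $2r>n+p$ is exactly what is needed to guarantee convergence to a continuous function.

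The resulting function $h$ is continuous on $\sphere{n-1}$, and $d\xi=h\,d\sigma_{n-1}$ is an absolutely continuous measure. By the uniqueness of Levy $p$-representations for $\pnoteven$, this $\xi$ is the only candidate Levy $p$-representation of $\norma{\cdot}$; in particular, no discrete Levy $p$-representation exists, and Theorem~\ref{mainTheorem} rules out an isometric embedding into $\ell_p$. The main technical obstacle is the precise bookkeeping that matches the smoothness exponent $2r$ against the eigenvalue decay rate $\beta(n,p)$ together with the dimensional losses inherent in the Sobolev-type estimates on the sphere; the threshold $2r>n+p$ in the hypothesis is the minimal one for which the naive counting closes (and could likely be sharpened slightly using Besov rather than integer-order Sobolev spaces).
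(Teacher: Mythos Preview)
Your high-level strategy is exactly the paper's: exhibit an even measure with continuous density whose $p$-cosine transform equals $\norma{\cdot}^p$, invoke uniqueness of the Levy $p$-representation for $\pnoteven$ to rule out a discrete representing measure, and apply Theorem~\ref{mainTheorem}. The only difference is that the paper obtains the continuous density by a direct citation of Koldobsky's theorem (\cite{kol94}, Theorem~1), whereas you sketch a self-contained harmonic-analytic proof of that very result via Funk--Hecke diagonalization of the cosine transform. Your eigenvalue asymptotics $|\lambda_{2m}|\asymp m^{-(n/2+p)}$ and the Sobolev bound $\norma{G_{2m}}_\infty=O(m^{(n-2)/2-2r})$ combine to give absolute convergence of $\sum_m\lambda_{2m}^{-1}G_{2m}$ precisely when $2r>n+p$, so the bookkeeping closes correctly. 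One small point worth making explicit: your constructed $h$ need not be nonnegative, so $h\,d\sigma_{n-1}$ is a priori only a signed measure; but uniqueness (Neyman/Kanter) holds among signed even measures, which is enough to force any putative discrete positive representing measure to coincide with $h\,d\sigma_{n-1}$ and hence yields the desired contradiction.
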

\begin{proof} Since $\min_{u\in\mathbb{S}^{n-1}}\norma{u}>0$, the function $x\to\norma{x}^p$ is also of class $C^{2r}$ on~$\sphere{n-1}$. Our proposition is then an immediate corollary of a result by Koldobsky,
 asserting that under the assumptions of the proposition, there exists a real-valued continuous function~$f$ defined on~$\sphere{n-1}$ such that
$$\norma{x}^p=\int_{\sphere{n-1}}|\innerproduct{x}{u}|^p\,f(u)\,du\,\quad (x\in\sphere{n-1})$$
See \cite{kol94}, Theorem~1. The conclusion follows now from the uniqueness of the Levy $p$-representation for $p\notin 2\mathbb{N}$, combined with~Theorem~\ref{mainTheorem}.
\end{proof}
{\bf Remark 2}
For the special case $p=1$, Lemma~\ref{theLemma} is stronger than the previous proposition, which for $n=2,p=1$ requires the norm to be at least of class $C^4$.
With a little more work and a few tools from differential and convex geometry,
the $C^2$ assumption in Lemma~\ref{theLemma} can be even further relaxed to the class $C^1$. Here is a sketch of a proof.
The equation  (\ref{1Levy}) can be generalized without any regularity assumptions on $G$, where $G''$ is treated as a distribution in the sense of Schwartz. $G$ being convex guarantees that $G+G''$ is 
a positive measure that must coincide with the $1$-Levy representing measure, by uniqueness of the latter (we are assuming $p=1$). This measure can contain atoms only if the first derivative $G'$ is not a function, but a distribution with a jump-discontinuity. However, such a state of affairs is excluded if $G$ is assumed to be continuously differentiable.


\begin{thebibliography}{references}

\bibitem{DJP}
Delbaen, F., Jarchow, H., Pe{\l}czy{\'n}ski, A. Subspaces of $L_p$ Isometric to Subspaces of $\ell_p$, Positivity, 2, 339--367 (1998)
\bibitem{Dor}
Dor, L.E. Potentials and isometric embeddings in $L_1$, Israel J. Math., 24,3--4, 260--268 (1976)
\bibitem{Lyub}
Lyubich, Y. On the boundary spectrum of contractions in Minkowski spaces, Sibirskii Math. Zhurnal 11,2 358--369 (1970)
\bibitem{Kanter} Kanter, M., The $L^p$ Norm of Sums of Translates of a Function, Trans. Amer. Math. Soc., 179, 35--47 (1973)
\bibitem{Kilbane} Kilbane, J., On embeddings of finite subsets of $\ell_p$, Proc. Amer. Math. Soc., 146, 5 (2018)
\bibitem{kolkon} Koldobsky, A., K{\" o}nig, H. Aspects of the isometric theory of Banach spaces. In: Johnson, W.B., Lindenstrauss J. (eds.) Handbook of the geometry of Banach spaces, pp. 899--939. North-Holland (2001)
\bibitem{kolBook}Koldobsky, A., Fourier Analysis in Convex Geometry, Mathematical Surveys and Monographs, 116, A.M.S (2005)
\bibitem{kol91}Koldobskii, A.,  Convolution equations in certain banach spaces, Proc. Amer. Math. Soc., 111, 3 (1991)
\bibitem{kol94}Koldobsky, A.,  Common subspaces of $L_p$ spaces, Proc. Amer. Math. Soc., 122, 1 (1994)
\bibitem{Neyman} Neyman, A., Representation of $L_p$-norms and isometric embedding in $L_p$-spaces, Israel J. Math., 48,3--4, 129--138 (1984)
\bibitem{Schneider} Schneider, R. Convex Bodies: The Brunn-Minkowski Theory, Second Expanded Edition, Cambridge (2014)
\bibitem{Woj} Wojtaszczyk, P., Banach spaces for analysts, Cambridge (1991)
\end{thebibliography}
\end{document}